\newtheorem{theorem}{Theorem}
\numberwithin{theorem}{section}
\newtheorem{proposition}[theorem]{Proposition}
\newtheorem{corollary}[theorem]{Corollary}
\newtheorem{rmk}{Remark}
\newcommand{\A}{{\mathcal A}}
\newcommand{\Z}{\mathbb{Z}}
\newcommand{\R}{\mathbb{R}}
\newcommand{\C}{\mathbb{C}}
\newcommand{\N}{\mathcal{N}}
\renewcommand{\H}{\mathbb{H}}
\renewcommand{\Re}{\mathrm{Re}}
\renewcommand{\Im}{\mathrm{Im}}
\newcommand{\be}{\begin{equation}}
\newcommand{\ee}{\end{equation}}
\newcommand{\old}[1]{}
\newcommand\blfootnote[1]{%
  \begingroup
  \renewcommand\thefootnote{}\footnote{#1}%
  \addtocounter{footnote}{-1}%
  \endgroup
}
\begin{document}
\title{Gradient variational problems in $\R^2$}
\author{Richard Kenyon\thanks{Department of Mathematics, Yale University, New Haven CT 06520; richard.kenyon at yale.edu. Research supported by NSF DMS-1854272, DMS-1939926 and the Simons Foundation grant 327929.} \and Istv\'an Prause\thanks{
Department of Physics and Mathematics, University of Eastern Finland, P.O. Box 111, 80101 Joensuu, Finland; istvan.prause at uef.fi.}}

\date{}
\maketitle
\begin{abstract}
We prove a new integrability principle for gradient variational problems in $\R^2$,
showing that solutions are explicitly parameterized by $\kappa$-harmonic functions,
that is, functions which are harmonic for the laplacian with varying conductivity $\kappa$,
where $\kappa$ is the square root of the Hessian determinant of the surface tension.
\end{abstract}

%{\let\thefootnote\relax\
\blfootnote{2010 \emph{Mathematics Subject Classification}: Primary 49Q10,  35C99; Secondary 82B20.}
%}
%keywords: Calculus of variations, random surface, limit shape

\section{Introduction}

We consider a variational principle for a function $h:\Omega \to\R$, $\Omega \subset \R^2$ where the quantity to be minimized,
the {\em surface tension} $\sigma$, depends only on the slope:
\begin{equation}
\label{eq:variationalproblem}
 \min_h\int_\Omega\sigma(\nabla h)\,dx\,dy, \quad h|_{\partial \Omega}=h_0.
\end{equation}
Here $\sigma:\N\to\R$ is assumed smooth and strictly convex in the interior $\mathring \N$; $\N\subset\R^2$ closed and simply connected. We call such a variational problem a \emph{gradient variational problem}. 
The problem involves a \emph{gradient constraint} as $\sigma$ is not defined outside $\N$. It is called \emph{admissible} if there exists a locally Lipschitz continuous extension $h$ of $h_0$ satisfying the gradient constraint $\nabla h \in \N$ a.e.
Strict convexity of $\sigma$ implies
both existence and uniqueness of the solution of an admissible problem, see \cite{DSS}. We will also assume a local ellipticity condition on $\sigma$: its Hessian determinant is nonzero on $\mathring \N$.

Gradient variational problems occur in many different settings. The foremost example is
the case of harmonic functions, for which $\sigma(\nabla h) = |\nabla h|^2$;  harmonic functions 
of course form the cornerstone of complex analysis. Another well-known example is the minimal
surface equation, where $\sigma(\nabla h) = \sqrt{1+|\nabla h|^2}$. Other important examples are 
the dimer model (or stepped surface model), see Figure \ref{bpp}, and its variants, and other
models of statistical mechanics. Up until now these problems have been studied in an {\it ad hoc} way. The present work provides a unified approach which applies to all variational problems of this type.

For a positive real function $\kappa:\R^2\to\R$, called \emph{conductance}, 
a \emph{$\kappa$-harmonic function} is a function $u$ satisfying 
$$\nabla\cdot\kappa\nabla u = 0.$$ This is the \emph{inhomogeneous-conductance Laplace equation},
or $\kappa$-Laplace equation. Our main result, Theorem \ref{main} below, 
is that graphs of solutions to any gradient variational problem (\ref{eq:variationalproblem})
are envelopes of $\kappa$-harmonically moving planes in $\R^3$.
Here $\kappa= \sqrt{\det\text{Hess}(\sigma)}$. 

We identify a large class of surface tensions for which this allows us to give explicit solutions.
We say that $\sigma$ \emph{has trivial potential} if its Hessian determinant is the fourth power of a harmonic function of the intrinsic coordinate (see definition below). We show that in this case one can solve (\ref{eq:variationalproblem}) in a very concrete sense,
called \emph{Darboux integrability}:
one can find explicit parameterizations of all solutions in terms of analytic functions. 

We discuss several representative examples, including 
the dimer model of \cite{KO1}, 
the ``enharmonic laplacian" $\sigma(s,t) = -\log st$ of \cite{AK}, and the $p$-laplacian $\sigma(s,t)=(s^2+t^2)^{p/2}$ 
and other isotropic surface tensions. An important example with trivial potential, the $5$-vertex model \cite{dGKW},
is discussed and worked out in detail in \cite{KP2}. 

\paragraph{Probabilistic applications.} While our results apply to any gradient variational problem, 
this work was originally motivated by probabilistic applications. ``Limit shapes'' for several probability models are known (or conjectured) to be minimizers of gradient variational problems. Let us briefly recall the limit shape problem, see Figure \ref{bpp} for an example, and \cite{CKP}.
\begin{figure}
\center{\includegraphics[width=4in]{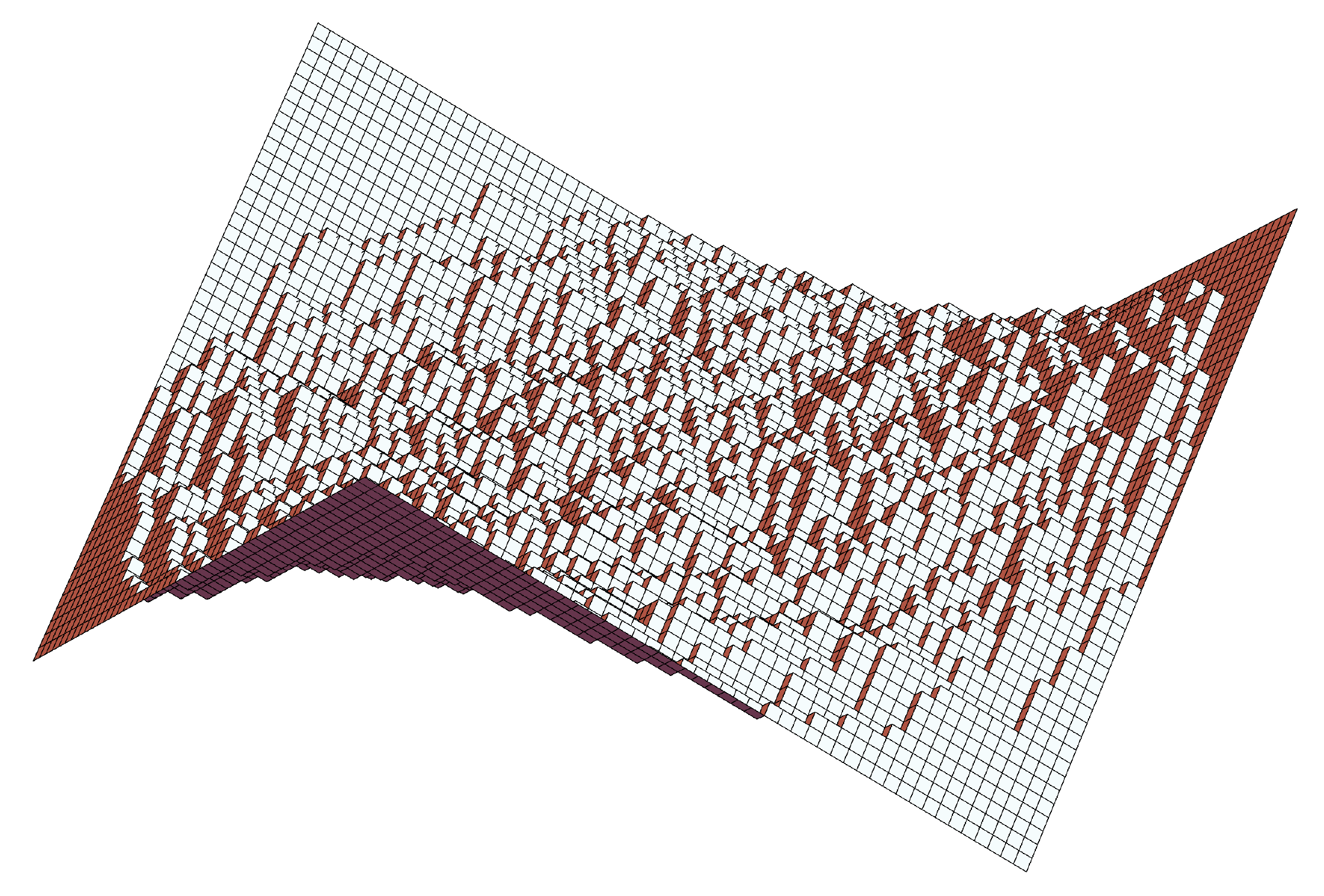}}
%\vskip-2cm
\caption{\label{bpp}A uniform random sample of a large ``Boxed Plane Partition" (shown here for a box of size $n=50$).
This is a \emph{stepped surface} spanning six edges of an $n\times n\times n$ cube. For $n$ large a random sample
lies (with probability tending to $1$ as $n\to\infty$) near a certain fixed smooth surface, which was first computed by Cohn, Larsen and Propp in \cite{CLP}.}
\end{figure}
Many well-known 
statistical mechanical models such as the plane partition model, the dimer model and the five- or six-vertex models, 
are models of random discrete Lipschitz functions from $\Z^2$ to $\Z$. The \emph{limit shape problem} 
is to understand the shape of the random function with fixed Dirichlet
boundary conditions in the \emph{scaling limit}, that is, in the limit when the lattice spacing tends to zero. In quite general
situations the random surface concentrates, in the scaling limit, onto a nonrandom continuous surface,
which is obtained by minimizing a gradient variational problem, where the surface tension
function encodes the ``local entropy'' of the probability model. 

For many determinantal models, the exact surface tension is known, see \cite{CKP,KOS} for the dimer model and \cite{WSun} for random Young tableaux. These have the property that $\kappa$ is constant and hence $\kappa$-harmonic functions are simply harmonic. In this situation we obtain a surprisingly simple representation of limit shapes: they are envelopes of harmonically moving planes in $\R^3$, see Corollary \ref{cor:dimerharmonic}. The advantage of this representation compared to those based on complex Burgers equation \cite{KO1,ADPZ} is that it makes the problem of matching a limit shape to given boundary values much more feasible and systematic, since often there is no need to guess the frozen boundary \cite{KP3}. We illustrate this by two examples in Section \ref{se:limitshapes}.

An essential novelty of our work lies in the fact that it also applies to \emph{non-determinantal} models.
Our prime example of a surface tension with trivial potential is in fact the surface tension arising in the five-vertex model. As far as we know, the five-vertex model \cite{dGKW} and its genus-zero generalisation \cite{KP2} are the only examples beyond determinantal models where the exact surface tension has been derived. With the help of the trivial potential property all limit shapes of these models can be explicitly parametrised \cite{KP2}.

In the current paper we borrow some terminology from the probability setting such as free energy, liquid region and amoeba;
see their definitions for general gradient models below. 

As a final note, it would be interesting to compare the ``tangent plane method'' of the present work and \cite{KP3} to the tangent (line) method of \cite{CS} introduced in the context of the six-vertex model.
\medskip

\noindent{\bf Acknowledgements.} We thank Robert Bryant for discussions and 
hints on Amp\`ere's theorem and Darboux integrability. 
We thank Filippo Colomo and Andrea Sportiello for discussions on arctic curves and the tangent method. We are grateful to the referees for their comments, suggestions and careful reading of the manuscript.

\section{Intrinsic coordinate}

Let $(s,t)$ be coordinates for $\N\subset\R^2$. 

\paragraph{Free energy.}
The Legendre dual $F(X,Y)$ to the surface tension $\sigma$ is defined for every $(X,Y) \in \R^2$ as
\[ F(X,Y)=\sup_{(s,t) \in \N} \left( sX+tY-\sigma(s,t) \right).
\]
In probabilistic settings $F$ is called the \emph{free energy}.
We define the {\em amoeba}\footnote{It is generally not an algebraic amoeba; the terminology is motivated by analogy with the dimer model.} $\A=\A(F) \subset \R^2$ of $F$ to be the closure of the set where $F$ is strictly convex. 
The gradient map $\nabla \sigma \colon \N \to \A$ gives a one-to-one correspondence between the interiors
$\mathring\N$ and $\mathring\A$.
By the Wulff construction \cite{DKS}, $F$ is a volume-constrained minimizer for $\sigma$, that is, a minimizer of the
surface tension functional (\ref{eq:variationalproblem}) under the additional constraint of having
fixed volume under the graph
of $h$, and for appropriate boundary conditions at $\infty$.

\paragraph{Isothermal coordinates and complex structure.}

The Hessian matrix of $\sigma$, $H_\sigma$, is positive definite and so determines a Riemannian metric on 
$\mathring\N$ 
$$g=\sigma_{ss}\,ds^2 + 2\sigma_{st}\,ds\,dt + \sigma_{tt}\,dt^2.$$
There is likewise a metric on $\mathring \A$ determined by the Hessian of $F$; the map $\nabla \sigma: \mathring \N\to \mathring \A$ 
is however an isometry for these metrics. 

Let $z=u+iv=u(s,t)+iv(s,t)$ be a conformal (also known as isothermal) coordinate system for $g$, that is, 
$g =e^{\Phi}(du^2+dv^2)$
for some function $\Phi=\Phi(u,v)$. To match with the usual definitions for the dimer model, we will choose
$z$ to be an orientation \emph{reversing} homeomorphism to some uniformizing domain, which we can choose to be 
either $\H$, the upper 
half-plane\footnote{In the dimer model $\sigma$ is smooth
on $\mathring\N$ except at a finite number of points, where it has conical singularities; these singularities
lead to holes in $\A$.  It is then natural to parameterize $\A$ with a multiply-connected domain
%; indeed, the doubled amoeba in this complex structure reconstructs the \emph{spectral curve} as a Riemann surface
\cite{KOS}. 
For our purposes here, however, $\A$ is assumed simply connected.}
or $\C$. 
We call $z$ the \emph{intrinsic coordinate}.

To find $z$, following Gauss (see e.g. \cite{Spivak4}), we solve the Beltrami equation
\be\label{Belt0} \frac{\bar \partial z}{\partial z}=\frac{\frac12(z_s+i z_t)}{\frac12(z_s-i z_t)}=\frac{1}{\bar \mu_\sigma},
\ee
where $\mu_\sigma$ is the \emph{Beltrami coefficient}
\[ \mu_\sigma=\frac{\sigma_{ss}-\sigma_{tt}+2i \sigma_{st}}{\sigma_{ss}+\sigma_{tt}+2 \sqrt{\sigma_{ss}\sigma_{tt}-\sigma_{st}^2}}.
\]
Equivalently, define  the {\em Gauss map} (or ``complex slope'') $\gamma$ on $\N$ to be 
\begin{equation} 
\label{eq:gamma}
\gamma:=\frac{-\sigma_{st}-i\sqrt{\sigma_{ss}\sigma_{tt}-\sigma_{st}^2}}{\sigma_{ss}}=\frac{\sigma_{tt}}{-\sigma_{st}+i \sqrt{\sigma_{ss}\sigma_{tt}-\sigma_{st}^2}},
\end{equation}
then $z$ satisfies the following equation
\be\label{Belt1} \frac{z_s}{z_t}=-\frac1{\bar \gamma}.
\ee
%$$\frac{-1/\bar \gamma+i}{-1/\bar \gamma-i}=\frac{1}{\bar \mu_\sigma}.$$ 
The local ellipticity of $\sigma$ implies that $|\mu_\sigma|<1$ and uniformly bounded away from $1$ on compact subsets of $\mathring \N$ which guarantees the existence of the solution for the Beltrami equation in \eqref{Belt0} \cite{AIM}. 

We can  equivalently work on $\A$ and define $z$ by the equation
\be\label{Belt2}\frac{z_X}{z_Y}= \bar \gamma.\ee 

Since $\bar z_XX_z + \bar z_YY_z=0$, and $\bar z_ss_z + \bar z_tt_z=0$, equations (\ref{Belt1}) and (\ref{Belt2}) lead to 
\be\label{othergamma} \gamma=-\frac{Y_z}{X_z}=\frac{s_z}{t_z}. 
\ee

With this setup $X,Y$ and $s,t$ are all functions of the intrinsic coordinate $z$, and satisfy (\ref{othergamma}).

\begin{proposition}
\label{prop:isothermal}
The following statements are each equivalent to $\zeta$ being an intrinsic coordinate
\begin{enumerate}
\item[(i)]
$ \frac{X_\zeta}{t_\zeta}+\frac{Y_\zeta}{s_\zeta}=0$,
\item[(ii)]
$\frac{s_\zeta}{t_\zeta}=\frac{-\sigma_{st} - i\sqrt{\det H_\sigma}}{\sigma_{ss}},
$
\end{enumerate}
and when either of these holds we necessarily have \be\label{iii}
\frac{X_\zeta}{t_\zeta}=- i \sqrt{\det H_\sigma}=-\frac{Y_\zeta}{s_\zeta},
\ee
\end{proposition}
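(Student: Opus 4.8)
The plan is to reduce all three assertions to a single algebraic statement about the two quantities $s_\zeta := \partial_\zeta s$ and $t_\zeta := \partial_\zeta t$ together with the entries of $H_\sigma$. The one structural input I need is Legendre duality: since $(X,Y)=\nabla\sigma=(\sigma_s,\sigma_t)$ on $\mathring\N$, the chain rule for the Wirtinger derivative $\partial_\zeta$ in any coordinate $\zeta$ gives
\be\label{eq:chain}
X_\zeta = \sigma_{ss}\, s_\zeta + \sigma_{st}\, t_\zeta, \qquad Y_\zeta = \sigma_{st}\, s_\zeta + \sigma_{tt}\, t_\zeta.
\ee
First I would record these identities, since they convert every expression in the statement into a quadratic in $s_\zeta,t_\zeta$ with coefficients drawn from the Hessian.

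Next I would treat (i). Clearing denominators, $\frac{X_\zeta}{t_\zeta}+\frac{Y_\zeta}{s_\zeta}=0$ is equivalent to $X_\zeta s_\zeta + Y_\zeta t_\zeta=0$, and substituting \eqref{eq:chain} turns this into
\be\label{eq:quad}
\sigma_{ss}\, s_\zeta^2 + 2\sigma_{st}\, s_\zeta t_\zeta + \sigma_{tt}\, t_\zeta^2 = 0.
\ee
The point to emphasize is that the left side is exactly $g(\partial_\zeta,\partial_\zeta)$, the complex-bilinear extension of the metric $g$ evaluated on $\partial_\zeta$; its vanishing is the classical characterization of $\zeta$ as an isothermal (intrinsic) coordinate for $g$. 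Dividing \eqref{eq:quad} by $t_\zeta^2$ and setting $w=s_\zeta/t_\zeta$ yields $\sigma_{ss}w^2+2\sigma_{st}w+\sigma_{tt}=0$, whose two roots are precisely $\gamma$ and $\bar\gamma$ from \eqref{eq:gamma}. Thus (i) holds iff $w=\gamma$ or $w=\bar\gamma$, whereas (ii) is the single assertion $w=\gamma$, which is exactly the $s_\zeta/t_\zeta$ half of \eqref{othergamma} and hence the defining property of the intrinsic coordinate.

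The main obstacle is the orientation bookkeeping that separates the two roots. Both $\zeta$ and its conjugate $\bar\zeta$ satisfy \eqref{eq:quad} — the equation is invariant under $\zeta\mapsto\bar\zeta$ because $s,t,X,Y$ are real — and passing to $\bar\zeta$ interchanges $w=\gamma$ with $w=\bar\gamma$, so the quadratic alone cannot pin down the branch. I would resolve this by invoking the orientation-reversing convention fixed for the intrinsic coordinate together with the sign of $\sqrt{\det H_\sigma}>0$ appearing in \eqref{eq:gamma}, which selects the root $\gamma$ (the one with negative imaginary part) and thereby upgrades the unoriented condition (i) to the oriented statement (ii). This is the only place where anything beyond routine computation enters.

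Finally, for \eqref{iii} I would simply substitute (ii), i.e. $s_\zeta=\gamma\,t_\zeta$, back into \eqref{eq:chain}. Using $\sigma_{ss}\gamma+\sigma_{st}=-i\sqrt{\det H_\sigma}$ (immediate from the first expression for $\gamma$) gives $X_\zeta=-i\sqrt{\det H_\sigma}\,t_\zeta$, while $\sigma_{tt}/\gamma=-\sigma_{st}+i\sqrt{\det H_\sigma}$ (from the second expression for $\gamma$) gives $Y_\zeta=i\sqrt{\det H_\sigma}\,s_\zeta$. Together these produce $\frac{X_\zeta}{t_\zeta}=-i\sqrt{\det H_\sigma}=-\frac{Y_\zeta}{s_\zeta}$, which is \eqref{iii}, and as a byproduct reconfirms that the $-Y_\zeta/X_\zeta=\gamma$ half of \eqref{othergamma} is automatic once (ii) holds.
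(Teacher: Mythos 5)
Your proposal is correct and follows essentially the same route as the paper: reduce (i) via the chain rule to the quadratic $\sigma_{ss}\gamma^2+2\sigma_{st}\gamma+\sigma_{tt}=0$ in $\gamma=s_\zeta/t_\zeta$, use the orientation-reversing convention to select the root with $\Im\gamma<0$ (giving (ii)), substitute back to obtain \eqref{iii}, and identify (ii) with the Beltrami/isothermal characterization of the intrinsic coordinate. Your added remark that the quadratic is exactly $g(\partial_\zeta,\partial_\zeta)=0$ is a nice explicit justification of the last step, which the paper leaves implicit in the phrase ``in terms of the inverse mapping,'' but it is the same argument.
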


\begin{proof}
Define $\gamma$ by $\gamma=\frac{s_\zeta}{t_\zeta}$. Since $X_\zeta=(\sigma_s)_\zeta=\sigma_{ss} s_\zeta + \sigma_{st} t_\zeta$ we can write 
$\frac{X_\zeta}{t_\zeta}=\sigma_{ss} \gamma +\sigma_{st}$.
Similarly, 
$\frac{Y_\zeta}{s_\zeta}=\sigma_{tt} \frac{1}{\gamma} +\sigma_{st}$. Now $(i)$ is equivalent to the equation
\[ \sigma_{ss} \gamma^2 +2\sigma_{st} \gamma +\sigma_{tt}=0,
\]
which has two solutions
\[ \gamma=\frac{-\sigma_{st} \pm i \sqrt{\det H_\sigma}}{\sigma_{ss}}.
\]
Because $\zeta$ is assumed to be orientation reversing, $\Im \gamma<0$ and thus we have the minus sign above. This leads to $\frac{X_\zeta}{t_\zeta}=-i \sqrt{\det H_\sigma}=-\frac{Y_\zeta}{s_\zeta}$. Thus $(i),(ii)$ are equivalent
(and imply $(\ref{iii})$). On the other hand by definition the intrinsic complex variable $\zeta$ is an orientation-reversing homeomorphism $\zeta \colon \N \to \H$ solving (\ref{Belt1}); this is equivalent to $(ii)$ in terms of the inverse mapping.
\end{proof}

%%%
\old{
\begin{proposition}
\label{prop:isothermal}
The complex derivatives satisfy the following relation
\[ \frac{X_z}{t_z}=-i \sqrt{\det H_\sigma}=-\frac{Y_z}{s_z}.
\]
Conversely, the intrinsic coordinate $z$ is characterized by these equations.
\end{proposition}

\begin{proof}
Since $X_z=(\sigma_s)_z=\sigma_{ss} s_z+\sigma_{st} t_z$, dividing by $t_z$ and using (\ref{othergamma}) and 
(\ref{eq:gamma})  we have
$X_z/t_z=-i \sqrt{\det H_\sigma}$.
Then (\ref{othergamma}) gives the other equality.
\end{proof}
}
%%%

\paragraph{Intrinsic Euler-Lagrange equation.}
Consider a minimizer $h$ of the variational problem \eqref{eq:variationalproblem}. A priori regularity results for the minimizer are established in \cite{DSS}.
The {\em liquid region} $\mathcal{L} \subset \Omega$ is defined to be the subset where $\nabla h$ is in the interior of $\N$
and thus $\sigma$ is smooth. On the liquid region $h$ satisfies the associated Euler-Lagrange equation 
\begin{equation}
\label{eq:EL} 
\mbox{div}(\nabla \sigma \circ \nabla h)=0,
\end{equation}
or simply $X_x+Y_y=0$.
Amp\`ere showed that, combined with the fact that $\nabla h$ is curl-free we get a single complex equation for $z \colon \mathcal{L} \to \C$:
\begin{theorem}[Amp\`ere \cite{Amp}]
\label{thm:intrinsicEL}
We have
\be
\label{Ampere}X_zz_x + Y_z z_y=0 \quad (x,y) \in \mathcal{L}.
\ee
In an equivalent form,
\[ \frac{z_x}{z_y}=\frac{s_z}{t_z}=\gamma.
\]
\end{theorem}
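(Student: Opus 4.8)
The plan is to show that the two facts available on the liquid region $\mathcal{L}$ — the Euler--Lagrange equation (\ref{eq:EL}) in the form $X_x+Y_y=0$, and the curl-free condition $s_y=t_x$ coming from $s=h_x$, $t=h_y$ — are precisely the imaginary and real parts of the single complex identity $s_z z_y - t_z z_x=0$. Throughout I regard $X,Y,s,t$ as real-valued functions of the intrinsic coordinate, pulled back to $\mathcal{L}$ through the composition $z=z(s,t)$ with $(s,t)=\nabla h$; on $\mathcal{L}$ the map $\nabla h$ takes values in $\mathring\N$ and $h$ is smooth, so the chain rule below is legitimate. Write $\kappa:=\sqrt{\det H_\sigma}$, which is nonzero by the ellipticity assumption.

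First I would treat the Euler--Lagrange side. Expanding by the chain rule gives $X_x=X_z z_x+X_{\bar z}\bar z_x$ and $Y_y=Y_z z_y+Y_{\bar z}\bar z_y$. Since $X,Y$ are real-valued and $x,y$ are real, one has $X_{\bar z}\bar z_x=\overline{X_z z_x}$ and $Y_{\bar z}\bar z_y=\overline{Y_z z_y}$, so (\ref{eq:EL}) reads $2\Re(X_z z_x+Y_z z_y)=0$. Now I invoke (\ref{iii}) of Proposition \ref{prop:isothermal}, namely $X_z=-i\kappa\,t_z$ and $Y_z=i\kappa\,s_z$, to rewrite
\[
X_z z_x+Y_z z_y=i\kappa\,(s_z z_y - t_z z_x).
\]
Setting $V:=s_z z_y-t_z z_x$ and using that $\kappa$ is real and nonzero, $\Re(i\kappa V)=-\kappa\,\Im V$, so the Euler--Lagrange equation becomes exactly $\Im V=0$.

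Next I would treat the curl-free side identically. Expanding $s_y=s_z z_y+s_{\bar z}\bar z_y$ and $t_x=t_z z_x+t_{\bar z}\bar z_x$, and again using that $s,t$ are real, the condition $s_y-t_x=0$ collapses to $2\Re(s_z z_y-t_z z_x)=2\Re V=0$. Combining the two computations, $V$ has vanishing real and imaginary parts, hence $V=0$. This is the asserted identity $z_x/z_y=s_z/t_z$, which equals $\gamma$ by (\ref{othergamma}); substituting $V=0$ back into the displayed formula yields $X_z z_x+Y_z z_y=i\kappa V=0$, which is (\ref{Ampere}). Thus the two forms of the theorem are established simultaneously.

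I do not expect a serious obstacle here: once set up, it is a short computation. The one point requiring care is the bookkeeping that identifies the two hypotheses with complementary parts of a single complex equation. The key structural fact is that $X,Y,s,t$ are real while $z$ is complex, so that each chain-rule expansion of a real equation collapses to $2\Re$ of a single complex quantity; after the substitution (\ref{iii}) the Euler--Lagrange quantity is $i\kappa V$ and the curl-free quantity is $V$ itself, and the two hypotheses read $\Re(i\kappa V)=0$ and $\Re V=0$, forcing $V=0$. I would double-check the signs in (\ref{iii}) and confirm $\kappa\neq 0$ so that dividing out $i\kappa$ is valid, and verify that the smoothness of $h$ on $\mathcal{L}$ together with the composition through the intrinsic coordinate justifies the chain-rule step.
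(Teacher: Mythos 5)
Your proof is correct and follows essentially the same route as the paper: both arguments expand the real equations $X_x+Y_y=0$ and $s_y=t_x$ via the chain rule in $z,\bar z$, use the realness of $X,Y,s,t$, and invoke the identities $X_z=-i\sqrt{\det H_\sigma}\,t_z$, $Y_z=i\sqrt{\det H_\sigma}\,s_z$ from Proposition \ref{prop:isothermal} to collapse everything into the single complex equation $X_zz_x+Y_zz_y=0$. The only cosmetic difference is that you package the two hypotheses as the real and imaginary parts of $s_zz_y-t_zz_x$, while the paper groups them as $X_x-i\sqrt{\det H_\sigma}\,t_x=2X_zz_x$ and $Y_y+i\sqrt{\det H_\sigma}\,s_y=2Y_zz_y$.
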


\begin{proof}
Observe that
\begin{align*} 
X_x-i \sqrt{\det H_\sigma} t_x=X_z z_x + X_{\bar z} \bar z_x -i \sqrt{\det H_\sigma} \left( t_z z_x + t_{\bar z} \bar z_x \right)\\
= \left( X_z - i \sqrt{\det H_\sigma} t_z \right) z_x +\overline{\left( X_z + i \sqrt{\det H_\sigma} t_z \right)z_x}= 2 X_z z_x 
\end{align*}
in view of Proposition \ref{prop:isothermal}. Similarly,
\[ Y_y+i \sqrt{\det H_\sigma} s_y=2 Y_z z_y. 
\]

Thus the Euler-Lagrange equation $X_x+Y_y=0$ together with the curl-free condition $t_x=s_y$ combine to a single complex equation
\[ X_z z_x+ Y_z z_y=0.
\]
\end{proof}

\begin{rmk} We can make a volume-constrained problem by fixing the volume under the graph of $h$.  
The volume constrained Euler-Lagrange equation is $2X_z z_x+2 Y_z z_y=c$, where $c\in\R$ is the Lagrange multiplier for the volume, see \cite{KO1}.
\end{rmk}

\paragraph{Complex structure on the liquid region.}
There is a canonical mapping from the liquid part $\mathcal{L}$
of the minimizer to the Wulff shape, defined in terms of the corresponding tangent planes. In planar coordinates, $(x,y) \in \mathcal{L} \mapsto (\nabla \sigma \circ \nabla h)(x,y) \in \mathcal{A}$. Composed with the isothermal coordinate, $z \colon \mathcal{L} \to \C$ gives an intrinsic complex structure to $\mathcal{L}$ with 
\begin{equation}
\label{eq:cpxliquid}
\frac{z_x}{z_y}=\gamma.
\end{equation}
Thus in the intrinsic complex structure the map $\mathcal{L} \to \mathcal{A}$ is holomorphic. For surface tensions arising from the dimer model, equation \eqref{eq:cpxliquid} is equivalent to the complex Burgers equations of \cite{KO1}, or to the Beltrami equations studied in \cite{ADPZ}.

\section{$\kappa$-harmonic functions}
We call a (sufficiently smooth) solution $w$ to the conductivity equation in a domain $D \subset \R^2$
\begin{equation}
\label{eq:conductivity}
 \nabla \cdot \kappa \nabla w =0
\end{equation} 
a {\em $\kappa$-harmonic function}\footnote{the conventional terminology is $\sigma$-harmonic but we reserve $\sigma$ for the surface tension.  See e.g. Chapter 16 of \cite{AIM} for more on $\kappa$-harmonic functions.}. The conductivity $\kappa \colon D \to (0,\infty)$ in our setting is smooth, indeed we set $\kappa(z)=\sqrt{\det H_\sigma}$. That is, we consider the Hessian determinant of $\sigma$ (in $(s,t)$-coordinates) as a function of the intrinsic coordinate $z \in D$. Here $D$ is $\H$ or $\C$. For later purposes, we also introduce the {\em fourth root}
\be 
\label{eq:intrinsichessian}
\psi(z):= \sqrt[4]{\det H_\sigma}=\kappa(z)^{1/2}>0.
\ee
We interpret Proposition \ref{prop:isothermal} in real notation as follows. Recall that $z=u+iv$ and the 
Hodge star operator acts as a (counterclockwise) rotation by $90$ degrees, i.e. $\ast \nabla=\ast (\partial_u,\partial_v)=(-\partial_v,\partial_u)$,
\be\label{nabXY}\nabla X=\ast \kappa \nabla t \quad \text{and} \quad \nabla(-Y)=\ast \kappa \nabla s.
\ee
Since the Hodge star operator transforms curl-free fields into divergence-free fields, it follows that
\begin{equation}
\label{eq:k-harmonicst}
\nabla \cdot \kappa \nabla t=0 \quad \text{and} \quad \nabla \cdot \kappa \nabla s=0,
\end{equation}
and thus $t$ and $s$ are both $\kappa$-harmonic functions in $D$. Equation (\ref{nabXY}) shows that 
by definition $X$ and $-Y$ are the respective conjugate functions. These are in turn $1/\kappa$-harmonic functions.

\paragraph{Reduction to Schr\"odinger equation.}
A standard technique is to reduce \eqref{eq:conductivity} to the Schr\"odinger equation
\begin{equation}
\label{eq:Schrodinger}
(-\Delta+q)(\tilde w)=0,
\end{equation}
with potential $q=\frac{\Delta \psi}{\psi}$. Indeed, with the substitution $\tilde w=w \psi$ satisfies \eqref{eq:Schrodinger}. 
Define real-valued functions $\phi,\phi^*$ as $\phi=s\psi$ and $\phi^*=t\psi$. From \eqref{eq:k-harmonicst} and the above reduction
\be
\label{eq:phi}
\psi \Delta\phi - \phi \Delta\psi =0 \quad \text{and} \quad \psi \Delta\phi^* - \phi^* \Delta\psi =0,
\ee
which can also be verified directly from Proposition \ref{prop:isothermal}.

\paragraph{The intercept function.}
Consider next the \emph{intercept function} $h-(sx+ty)$ in the liquid region $(x,y) \in \mathcal{L}$, where $h$ is the minimizer, and $(s,t)=\nabla h(x,y)$. We will also view this function as a function of the intrinsic coordinate $z$; now however this function is multi-valued, as $(x,y) \mapsto z$ is generally many-to-one. Alternatively, we can consider the intercept function as a single-valued function in the liquid region in its intrinsic complex structure.
The next theorem shows that the intercept function is $\kappa$-harmonic with respect to this intrinsic variable.

\begin{theorem}
\label{thm:k-harmonic}
The functions $s$, $t$ and $h-(sx+ty)$ are all $\kappa$-harmonic in the liquid region with the respect to the intrinsic coordinate $z$.
\end{theorem}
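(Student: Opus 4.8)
Since $s$ and $t$ have already been shown to be $\kappa$-harmonic on $D$ in \eqref{eq:k-harmonicst}, and these functions are pulled back to $\mathcal{L}$ through the intrinsic coordinate, the only genuinely new content is the intercept function $g:=h-(sx+ty)$. The plan is to show directly that the conductivity form $\kappa\,dg$ has closed Hodge dual, i.e. $d(\ast\kappa\,dg)=0$, which is exactly the assertion $\nabla\cdot\kappa\nabla g=0$.

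First I would compute the differential of $g$ on $\mathcal{L}$. Using $dh=s\,dx+t\,dy$ (since $(s,t)=\nabla h$) and the product rule, the $dx,dy$ terms cancel and one is left with
\[ dg=-x\,ds-y\,dt. \]
This is the key simplification and the one step I expect to carry the proof: it trades the awkward dependence of $g$ on $h$ for a clean expression in the $\kappa$-harmonic functions $s,t$, whose behaviour is already controlled. Next I would feed this into the Hodge-star relations packaged in \eqref{nabXY}. Reading $\nabla X=\ast\kappa\nabla t$ and $\nabla(-Y)=\ast\kappa\nabla s$ as identities of $1$-forms gives $\ast\kappa\,dt=dX$ and $\ast\kappa\,ds=-dY$. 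Since the Hodge star is pointwise linear over functions,
\[ \ast\kappa\,dg=-x\,(\ast\kappa\,ds)-y\,(\ast\kappa\,dt)=x\,dY-y\,dX. \]
Taking the exterior derivative and using $d^2=0$, then expanding $dX,dY$ in the planar coordinates $(x,y)$,
\[ d(\ast\kappa\,dg)=dx\wedge dY-dy\wedge dX=(X_x+Y_y)\,dx\wedge dy. \]
The right-hand side vanishes by the Euler--Lagrange equation $X_x+Y_y=0$ of \eqref{eq:EL}, and since $d(\ast\kappa\,dg)=(\nabla\cdot\kappa\nabla g)\,du\wedge dv$ this is precisely the $\kappa$-harmonicity of $g$.

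The only point requiring care is that $g$, and the planar coordinates $x,y$, are multivalued as functions of the intrinsic variable $z$ because $\mathcal{L}\to\C$ is generally many-to-one; however $\kappa$-harmonicity is a local condition and the forms $ds,dt,dX,dY$ appearing above are single-valued, so the computation is insensitive to the choice of local branch. One should also check that the $\ast$ used throughout is the intrinsic Hodge star attached to the metric $g=e^{\Phi}(du^2+dv^2)$, which is the one appearing in \eqref{nabXY}; since in two dimensions the Hodge star on $1$-forms is conformally invariant, the identity $d(\ast\kappa\,dg)=0$ yields $\nabla\cdot\kappa\nabla g=0$ in the flat intrinsic coordinates $(u,v)$ as claimed. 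A completely equivalent route, avoiding forms, is to substitute the scalar relations $\kappa s_u=-Y_v$, $\kappa s_v=Y_u$, $\kappa t_u=X_v$, $\kappa t_v=-X_u$ (all read off from \eqref{nabXY}) into $(\kappa g_u)_u+(\kappa g_v)_v$ and observe, after the mixed second derivatives of $X$ and $Y$ cancel, that what remains factors as the Jacobian $\tfrac{\partial(x,y)}{\partial(u,v)}$ times $X_x+Y_y$, which again vanishes by Euler--Lagrange.
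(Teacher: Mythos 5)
Your proof is correct, but it follows a genuinely different route from the paper's. The paper proves $\kappa$-harmonicity of the intercept function via the Schr\"odinger reduction: it multiplies by $\psi=\kappa^{1/2}$, computes $\partial_z$ and then $\partial_{\bar z}$ of $\psi\,(h-(sx+ty))$, invokes the fact that $\phi=s\psi$ and $\phi^*=t\psi$ solve \eqref{eq:Schrodinger}, and uses Amp\`ere's intrinsic Euler--Lagrange equation in the complex form $-y_{\bar z}/x_{\bar z}=s_z/t_z$ to produce the cancellation, arriving at the identity \eqref{eq:h-sx-ty}. You instead bypass both the $\psi$-substitution and the complex form of the Euler--Lagrange equation: the Legendre-type identity $dg=-x\,ds-y\,dt$, combined with the conjugate-function relations $\ast\kappa\,dt=dX$ and $\ast\kappa\,ds=-dY$ from \eqref{nabXY}, turns $d(\ast\kappa\,dg)$ into $(X_x+Y_y)\,dx\wedge dy$, which vanishes by the divergence form \eqref{eq:EL} of the Euler--Lagrange equation. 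This is more economical and makes transparent that the theorem is a direct consequence of the Euler--Lagrange equation together with the definition of $\kappa$; your closing remarks on multivaluedness, conformal invariance of the Hodge star on $1$-forms, and the Jacobian factor $\partial(x,y)/\partial(u,v)$ correctly handle the same caveat the paper flags as ``away from critical points.'' What the paper's computation buys in exchange is the explicit intermediate identity \eqref{eq:h-sx-ty}, i.e.\ that $\psi\,(h-(sx+ty))$ solves the Schr\"odinger equation \eqref{eq:Schrodinger}; this is what is quoted later to conclude that $G=\psi(h-(sx+ty))$ is genuinely harmonic in the trivial-potential case, so if you adopt your argument you should note that \eqref{eq:h-sx-ty} still follows from $\kappa$-harmonicity via the standard substitution $\tilde w=w\psi$.
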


\begin{proof}
The statement for $s$ and $t$ is just a repetition of \eqref{eq:k-harmonicst}. We also record from
\eqref{eq:phi} 
\be
\label{eq:sandt}
 (s \psi)_{z \bar z}=\phi_{z \bar z}=s \psi_{z \bar z}=0 \quad \text{and} \quad (t \psi)_{z \bar z}=\phi^*_{z \bar z}=t \psi_{z \bar z}=0.
\ee
Consider next
\[ (h-(sx+ty)) \psi. 
\]
Let us take the $\partial_z$ derivative, and remembering that $\nabla h=(h_x,h_y)=(s,t)$
\begin{align*}
(\psi (h-(sx+ty)))_z&=\psi_z (h-(sx+ty))- \psi (s_z x + t_z y)\\
&=\psi_z (h-(sx+ty))+(s \psi_z-\phi_z)x+(t \psi_z - \phi^*_z)y.
\end{align*}
Take now the $\partial_{\bar z}$ derivative and observe a number of cancellations 
\begin{align*}
(\psi (h-(sx+ty)))_{z \bar z}=&
\psi_{z \bar z}(h-(sx+ty))-\psi_z(s_{\bar z}x+t_{\bar z} y)+s_{\bar z} \psi_z x + t_{\bar z} \psi_z y\\
&+(s \psi_{z \bar z}-\phi_{z \bar z}) x+(t \psi_{z \bar z}-\phi^*_{z \bar z}) y+(s \psi_z-\phi_z) x_{\bar z} + (t \psi_z- \phi^*_z) y_{\bar z} \\
=&\psi_{z \bar z} (h-(sx+ty))-\psi(s_ z x_{\bar z}+ t_z y_{\bar z}).
\end{align*}
We now apply Theorem \ref{thm:intrinsicEL} (the intrinsic Euler-Lagrange equation) for the inverse relation -- away from critical points -- $z \mapsto (x,y)$ in the form $-y_{\bar z}/x_{\bar z}=z_x/z_y=s_z/t_z$ to find altogether
\be
\label{eq:h-sx-ty}
 ((h-(sx+ty)) \psi)_{z \bar z}= (h-(sx+ty)) \psi_{z \bar z}.
\ee
This means that $(h-(sx+ty)) \psi$ is a solution to \eqref{eq:Schrodinger}, in other words, $h-(sx+ty)$ is a solution to \eqref{eq:conductivity}, i.e. it is $\kappa$-harmonic.
\end{proof}

\paragraph{Envelopes.} 
The graph of the minimizer $h$ is a surface in $\R^3$. Over a point $(x,y) \in \mathcal{L}$ there is a tangent plane to the minimizer 
which has slope $(s,t)=\nabla h (x,y)$ and it intersects the vertical axis at the point $h(x,y)-(sx+ty)$. 
That is, using $(x,y,x_3) \in \R^3$ coordinates, the tangent plane is $P_z =\{x_3 = sx+ty+c\}$ where $c=h-(sx+ty)$. Theorem \ref{thm:k-harmonic} says that all the coefficients $s,t,c$ of these tangent planes are $\kappa$-harmonic with respect to $z$. In other words, we have shown:

\begin{theorem}\label{main} The minimizer is an envelope of $\kappa$-harmonically moving planes in $\R^3$.
\end{theorem}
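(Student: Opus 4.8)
The plan is to recognize that this statement is essentially a geometric repackaging of Theorem~\ref{thm:k-harmonic}, so the work divides into two parts: making the notion of an ``envelope of $\kappa$-harmonically moving planes'' precise, and verifying that the graph of the minimizer really is that envelope. First I would fix the family of planes: to each intrinsic parameter value $z=u+iv$ in the liquid region I associate the plane $P_z=\{x_3=s\,x+t\,y+c\}$, whose three coefficients $s=h_x$, $t=h_y$ and $c=h-(sx+ty)$ are, by Theorem~\ref{thm:k-harmonic}, $\kappa$-harmonic functions of $z$. The phrase ``$\kappa$-harmonically moving'' then simply means that the plane's coefficients evolve as $\kappa$-harmonic functions of the parameter $z$, which is exactly what has already been established; so (a) is immediate.

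For part (b) I would check that the envelope of this two-parameter family of planes coincides with the graph of $h$. Writing $F(x,y,x_3;u,v):=s(u,v)\,x+t(u,v)\,y+c(u,v)-x_3$, the envelope is the locus on which $F=0$ together with the two first-order conditions $F_u=0$ and $F_v=0$; solving the latter pair of linear equations for $(x,y)$ and substituting back produces a surface parameterized by $(u,v)$. I would verify these conditions directly from the defining relations. Using the chain rule $h_u=s\,x_u+t\,y_u$ on the liquid region, differentiating $c=h-sx-ty$ yields $c_u=-s_u x-t_u y$, i.e. $s_u x+t_u y+c_u=0$, and symmetrically $s_v x+t_v y+c_v=0$; these are precisely $F_u=0$ and $F_v=0$, while $F=0$ holds tautologically since $x_3=h=sx+ty+c$ on the graph. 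Hence the graph of $h$ satisfies all three envelope equations, confirming it is the envelope of the family $\{P_z\}$.

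I expect the only genuine subtleties to be bookkeeping rather than conceptual, since Theorem~\ref{thm:k-harmonic} carries the analytic weight. The map $(x,y)\mapsto z$ is many-to-one, so the intercept $c$ is multivalued as a function of $z$; I would circumvent this by working in the intrinsic complex structure on $\mathcal{L}$ (as in Theorem~\ref{thm:k-harmonic}), where $c$ is single-valued, and only afterward pushing forward to $\R^3$. The parameterization also degenerates at critical points of $(x,y)\mapsto z$, where the inverse relation $z\mapsto(x,y)$ underlying the chain-rule step breaks down; there the envelope must be read as the closure of the envelope over the regular part, which is harmless as these points form a lower-dimensional set. With these caveats the statement follows in a few lines, essentially the two displayed envelope identities together with a citation of Theorem~\ref{thm:k-harmonic}.
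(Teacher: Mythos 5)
Your proposal is correct and follows essentially the same route as the paper: the paper's proof is precisely the observation that the tangent plane at $(x,y)\in\mathcal{L}$ is $P_z=\{x_3=sx+ty+c\}$ with $c=h-(sx+ty)$, so Theorem~\ref{thm:k-harmonic} immediately gives the $\kappa$-harmonicity of all three coefficients. Your explicit check of the stationarity conditions $s_ux+t_uy+c_u=0$ (which the paper leaves implicit here but records later as equation~\eqref{eq:complexline}) and your remarks on multivaluedness and critical points are consistent with the paper's treatment.
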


In circumstances where we can determine a priori the values of $z$ along the boundary of $\Omega$,
and where we can solve the $\kappa$-laplace equation with these boundary values,
this allows us to solve the problem (\ref{eq:variationalproblem}).

\section{Trivial potential}
\label{se:trivpotential}

We say that the surface tension $\sigma$ {\em has trivial potential} if $\psi=\kappa^{1/2}$ in \eqref{eq:intrinsichessian} is a harmonic function of $z$. In this case, the potential $q \equiv 0$ in \eqref{eq:Schrodinger} and thus $\kappa$-harmonic functions are \emph{ratios} of harmonic functions with a common denominator $\psi$. This leads to the following theorem.

\begin{theorem}
\label{thm:envelope}
If $\sigma$ has trivial potential then $s$, $t$ and $h-(sx+ty)$ are all ratios of harmonic functions (in $z$) in the liquid region with the common denominator $\psi(z)$.
\end{theorem}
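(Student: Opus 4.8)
The plan is to specialize the Schr\"odinger reduction already set up in the excerpt to the trivial-potential case, where the potential vanishes and ``$\kappa$-harmonic'' collapses to ``harmonic after multiplying by $\psi$''. First I would invoke Theorem \ref{thm:k-harmonic}, which already guarantees that $s$, $t$, and $h-(sx+ty)$ are $\kappa$-harmonic in $z$ on the liquid region. Then I would use the reduction $\tilde w = w\psi$: a $\kappa$-harmonic $w$ corresponds to a solution $\tilde w$ of $(-\Delta+q)\tilde w=0$ with $q=\Delta\psi/\psi$. The trivial-potential hypothesis says precisely that $\psi$ is harmonic, so $q\equiv 0$ and each such $\tilde w$ is genuinely harmonic. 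Dividing back, $w = \tilde w/\psi$ exhibits each of $s,t,h-(sx+ty)$ as a ratio of a harmonic function and the common denominator $\psi$ (itself harmonic), which is the claim.

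Concretely I would carry this out function by function. For $s$ and $t$ the conclusion falls straight out of \eqref{eq:phi}: substituting $\Delta\psi=0$ gives $\psi\Delta\phi=0$ and $\psi\Delta\phi^*=0$, and since $\psi>0$ both numerators $\phi=s\psi$ and $\phi^*=t\psi$ are harmonic. For the intercept function I would begin from the identity \eqref{eq:h-sx-ty}, rewrite the complex Laplacian as $\psi_{z\bar z}=\tfrac14\Delta\psi$, and note that harmonicity of $\psi$ kills the right-hand side; hence $(h-(sx+ty))\psi$ is harmonic and $h-(sx+ty)=\big((h-(sx+ty))\psi\big)/\psi$ has the required form.

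The only genuinely delicate point is interpretational rather than computational: as the excerpt notes, $h-(sx+ty)$ is multivalued as a function of $z$ because $(x,y)\mapsto z$ is many-to-one, so the statement must be understood in the intrinsic complex structure on $\mathcal{L}$, where the intercept function is single-valued; the harmonic numerator is obtained on that structure. I would make explicit that ``ratio of harmonic functions'' is meant in this intrinsic sense, so that the local harmonicity established via \eqref{eq:h-sx-ty} indeed matches the global statement on the liquid region. Beyond this bookkeeping, the argument is a direct specialization of the preceding results and I do not anticipate a substantial obstacle.
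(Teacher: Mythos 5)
Your proposal is correct and follows essentially the same route as the paper, which derives the theorem directly from the Schr\"odinger reduction: trivial potential means $\psi$ is harmonic, so $q=\Delta\psi/\psi\equiv 0$ and the $\kappa$-harmonic functions $s$, $t$, $h-(sx+ty)$ from Theorem \ref{thm:k-harmonic} become ratios of harmonic functions with common denominator $\psi$ via \eqref{eq:phi} and \eqref{eq:h-sx-ty}. Your added remark about interpreting the intercept function in the intrinsic complex structure on $\mathcal{L}$ is a reasonable clarification of a point the paper leaves implicit.
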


For the dimer model, the Hessian determinant is {\em constant}, hence the surface tension has trivial potential.

\begin{corollary}
\label{cor:dimerharmonic}
Minimizers in the dimer model are envelopes of harmonically moving planes.
\end{corollary}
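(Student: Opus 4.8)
The plan is to recognize that this corollary is nothing more than the specialization of Theorem \ref{main} to the case in which the conductivity $\kappa$ is constant, so the only work is to record why the dimer model lands in that case and why constant conductivity collapses $\kappa$-harmonicity to ordinary harmonicity. First I would import the one nonroutine ingredient from the dimer literature (see \cite{CKP,KOS}): the surface tension $\sigma$ of the dimer model has \emph{constant} Hessian determinant $\det H_\sigma$ on $\mathring\N$. Everything after this is formal. In particular $\kappa=\sqrt{\det H_\sigma}$ is then a positive constant as a function of the intrinsic coordinate $z$.

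Next I would unwind the definition of $\kappa$-harmonicity under this hypothesis. With $\kappa$ constant, the inhomogeneous-conductance Laplace equation \eqref{eq:conductivity}, namely $\nabla\cdot\kappa\nabla w=0$, reduces to $\kappa\,\Delta w=0$, that is $\Delta w=0$. Hence a function is $\kappa$-harmonic in $z$ exactly when it is harmonic in $z$. Equivalently, one may route through the trivial-potential formalism of Section \ref{se:trivpotential}: a constant $\kappa$ makes $\psi=\kappa^{1/2}$ a nonzero constant, which is harmonic, so $\sigma$ has trivial potential and Theorem \ref{thm:envelope} exhibits $s$, $t$ and $h-(sx+ty)$ as ratios of harmonic functions over the constant common denominator $\psi$; such ratios are again harmonic. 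Either route gives the same conclusion, and I would present the direct one as the cleaner of the two.

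Finally I would invoke Theorem \ref{main}: the minimizer is an envelope of $\kappa$-harmonically moving planes, which by the construction preceding that theorem means the plane coefficients $s$, $t$ and $c=h-(sx+ty)$ are each $\kappa$-harmonic in $z$ on the liquid region. By the previous paragraph these coefficients are therefore harmonic in $z$, so the minimizer is an envelope of harmonically moving planes, as claimed. There is essentially no internal obstacle in the argument; the sole nontrivial point is external, namely the constancy of $\det H_\sigma$ for the dimer surface tension, which has to be taken as input from the dimer model rather than derived within the present framework.
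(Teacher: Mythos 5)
Your proposal is correct and matches the paper's own proof: both cite the constancy of $\det H_\sigma$ for the dimer model (the paper quotes $\det H_\sigma\equiv\pi^2$ from Theorem 5.5 of \cite{KOS} and normalizes to $\kappa\equiv1$) and then observe that constant $\kappa$ reduces $\kappa$-harmonicity to ordinary harmonicity, so Theorem \ref{main} gives the result. The alternative route through Theorem \ref{thm:envelope} that you mention is a harmless elaboration but not needed.
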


\begin{proof}
By Theorem 5.5 of \cite{KOS}, $\det H_\sigma \equiv \pi^2$ for the dimer model. It means that for the normalized surface tension $\frac{1}{\pi} \sigma$, $\kappa \equiv 1$ and we have harmonic dependence for each coefficient.
%(and $\gamma$ is holomorphic in $z$).
\end{proof}

\begin{rmk}
For statistical mechanical models, $\pi/\kappa$ is known in the physics literature as the \emph{Luttinger parameter}, see e.g. \cite{BD,JMS}. For free fermionic systems, like the dimer model, the Luttinger parameter is the constant $1$.
\end{rmk}

The five-vertex model \cite{dGKW} is not free-fermionic and the Hessian determinant is \emph{not} constant. Nevertheless, its surface tension, as well that of its generalization, the genus-zero five-vertex model,
has trivial potential, see \cite{KP2}.

Theorem \ref{thm:envelope} leads to an algorithm for matching a minimizer for (certain) extremal boundary values, namely, those for which the boundary tangent planes are determined. 
This is discussed in \cite{KP3}; see Section \ref{se:limitshapes} below for an illustration of the method.

\paragraph{Solving the Euler-Lagrange equation.}

According to Theorem \ref{thm:envelope} if $\sigma$ has trivial potential then
\begin{equation}
\label{eq:G}
G(z)=\psi(h-(sx+ty))
\end{equation}
is a (possibly multi-valued) harmonic function. We now describe how to parametrize solutions in terms of this harmonic function. We have
$$h_z = sx_z+ty_z = (sx+ty)_z-s_zx-t_zy$$ from which we get
\[ s_z x +t_z y + (h-(sx+ty))_z =0,
\]
or
\begin{equation}
\label{eq:complexline}
s_z x + t_z y + (G/\psi)_z=0.
\end{equation}
The functions $s,t$ and $\psi$ are functions of $z$ depending only on the surface tension $\sigma$ while $G$ depends on the boundary conditions of $h$. The equation \eqref{eq:complexline} describes a complex line through $(x,y) \in \R^2$ with three complex coefficients. Since the complex slope $\frac{s_z}{t_z}=\gamma$ is not real (as $\Im \gamma <0$), this defines uniquely $(x,y)$ in terms of $z$,
\be\label{xandy} x(z)=-\frac{\Im(t_z^{-1} (\frac{G}{\psi})_z)}{\Im(\gamma)}, \quad y(z)=-\frac{\Im(s_z^{-1} (\frac{G}{\psi})_z)}{\Im(\gamma^{-1})}.
\ee
Finally, the solution $h(x,y)=sx+ty+G/\psi$ is also a function of $z$
\be\label{h} h(z)=\frac{G}{\psi} - \frac{\Im \left( \frac{s}{t_z}(G/\psi)_z \right)}{\Im \gamma} 
-\frac{\Im \left(\frac{t}{s_z}(G/\psi)_z \right)}{\Im (\gamma^{-1})}
\ee

This gives an explicit solution to the Euler-Lagrange equation for an \emph{arbitrary} harmonic function $G(z)$.
Typically, however, matching $G(z)$ with the desired Dirichlet boundary conditions on $h$ is a nontrivial problem. It is also worth noting that for a general $G$, (\ref{xandy}) and (\ref{h}) will solve the Euler-Lagrange equation \emph{locally};
it may
globally correspond to a self-intersecting surface and/or have ramification points.

\section{Examples of gradient variational problems}

\paragraph{Trivial potential example.}

For a first example, let $$\sigma(s,t) =\frac23(s^{-2} + t^{-2})$$
for $s,t\in(0,\infty)$. 
Then 
$H_\sigma=\begin{pmatrix} 4s^{-4}&0\\0&4t^{-4}\end{pmatrix}$,
and $z=s^{-1} - it^{-1}$ is a conformal coordinate.
We have $\det H_\sigma = 16(st)^{-4}$ so $\psi(z) = -\Im(z^2)$ is harmonic.
Also $s_z = \frac{-2}{(z+\bar z)^2}$ and $t_z = \frac{2i}{(z-\bar z)^2}.$
In the simplest case when $G$ is constant, $G\equiv C$, we get
$$(x,y,h) = (\frac{C}{2\Im z}, -\frac{C}{2\Re z},\frac{C}{\Im(z^2)}),$$
or equivalently
$$h(x,y) = (-2/C)xy.$$

\paragraph{Young tableaux.}
The surface tension for random Young tableaux was derived in \cite{WSun}:
$$\sigma(s,t) = -(1+\log\left(\frac{\cos{\pi s}}{\pi t}\right))t$$
where 
$$(s,t)\in(-\frac12,\frac12)\times(0,\infty).$$
This is obtained from a certain limit of the dimer model.

We have 
$$H_\sigma = \begin{pmatrix}\pi^2t\sec^2(\pi s)&\pi\tan(\pi s)\\\pi\tan(\pi s)&\frac1t\end{pmatrix}$$
with $\det H_\sigma \equiv \pi^2$, thus $\sigma$ has trivial potential. A conformal coordinate is 
$$z=-\pi t(\tan(\pi s)-i).$$

Now $\gamma=-\overline{\frac{z_t}{z_s}}=\frac{1}{z}$ and
equation (\ref{Ampere}) becomes
$$z_x-\frac{z_y}{z}=0$$
which is the complex Burgers equation. Using $z_xx_{\bar z}+z_yy_{\bar z}=0,$ this becomes
$y_{\bar z}+\frac{1}{z} x_{\bar z}=0$, and integrating
$$y+ \frac{x}{z}+f(z)=0$$
for an arbitrary analytic function $f$. 
Alternatively, solutions are envelopes of harmonically moving planes.

\paragraph{Enharmonic functions.}
In this case $\sigma(s,t) = -\log(st)$ where $s,t\in[0,\infty)$, see \cite{AK}. 
We have 
$$H_{\sigma} = \begin{pmatrix}\frac1{s^2}&0\\0&\frac1{t^2}\end{pmatrix}.$$
A conformal coordinate is $z=u+iv = \log s-i\log t$. Then $\psi = \frac1{\sqrt{st}}= e^{(-u+v)/2}$
and the potential is $q=\tfrac{\Delta\psi}{\psi} = \frac12,$ a constant.

Amp\`ere's equation (\ref{Ampere}) becomes
\be\label{uxyv}x_{\bar z}+ie^{-u-v}y_{\bar z}=0.\ee
Equivalently, 
$$\nabla y = *e^{u+v}\nabla x,$$
that is, $x$ is $e^{u+v}$-harmonic with conjugate $y$. 
This leads to $$\Delta x+x_u+x_v=0.$$
Solutions to this are linear combinations of $x=e^{au+bv}$ where $a^2+b^2+a+b=0$.
For this elementary solution
the corresponding conjugate $y$ is $y=-\frac{b}{a+1}e^{(a+1)u+(b+1)v}$. 
We then have
$$h_{\bar z} = sx_{\bar z} + ty_{\bar z}=e^ux_{\bar z} + e^{-v}y_{\bar z}=(1+i)(a+ib)e^{(a+1)u+bv}$$
whence
$$h=\frac{(a-b)}{a+1}e^{(a+1)u+bv}.$$
Thus a general solution to the variational problem can parameterized as a linear combination of
$$(x,y,h) = (e^{au+bv},-\frac{b}{a+1}e^{(a+1)u+(b+1)v},\frac{(a-b)}{a+1}e^{(a+1)u+bv})$$
where $(a,b)$ run over solutions to $a^2+b^2+a+b=0$. 

%%%
\old{
\paragraph{The $5$-vertex model}
For the $5$-vertex model with parameter $r<1$, \see \cite{}, 
$\N$ is the triangular region bounded by the positive axes and the hyperbola $1-s-t-(1/r^2-1)st=0$. 
%$\N$ is the triangle with vertices $(0,0),(1,0),(0,1)$.
The surface tension has a complicated formula but the intrinsic coordinate is
$z$ where 
$$\det H_\sigma = \frac1{\pi^2}(\arg\frac{z}{1-z})^4$$
so that $\psi=\pi^{-1/2}\arg\frac{z}{1-z}$ and the model has trivial potential.
We have $s=\frac{\arg z}{\sqrt{\pi}\psi}$ and $t=\frac{\sqrt{\pi}\arg w}{\psi}$ where $1-w-z+(1-r^2)zw=0$. 
}
%%%

\paragraph{The $p$-laplacian.}
Here $\sigma(s,t) = (s^2+t^2)^{p/2} = r^p.$
A conformal coordinate is $z=r^\alpha e^{-i\theta}$ where $re^{i\theta} = s+it$ and $\alpha = \sqrt{p-1}$.
Also $\det H_\sigma = p^2(p-1)|z|^{(2p-4)/\alpha}$ and 
$$q=\frac{\Delta\psi}{\psi} = \frac{(p-2)^2}{4(p-1)|z|^2} = \frac{C}{|z|^2}.$$
To solve 
$(\Delta - q)\tilde w = 0,$ use separation of variables: write $z=Re^{-i\theta}$ and look for solutions of the form
$\tilde w(z) = A(R)B(\theta).$ The equation is
$$(\frac{\partial^2}{\partial R^2} + \frac1R\frac{\partial}{\partial R}+\frac1{R^2}\frac{\partial^2}{\partial \theta^2} - \frac{C}{R^2})A(R)B(\theta) = 0,$$
so $$\frac{A''(R)+\frac1RA'(R)-\frac{C}{R^2}A(R)}{A(R)/R^2} = -\frac{B''(\theta)}{B(\theta)}.$$
Setting both sides to a constant $c$ gives the solution
$$B(\theta) = C_1e^{i\sqrt{c}\theta} + C_2e^{-i\sqrt{c}\theta}$$
$$A(R) = c_1R^{\sqrt{C-c}} + c_2 R^{-\sqrt{C-c}}.$$

Other isotropic surface tensions (surface tensions depending only on $r=\sqrt{s^2+t^2}$) 
can be dealt with in the same way:
a conformal coordinate can be chosen of the form $z=f(r)e^{i\theta}$, and $\psi$ is a function of $r$ only. 
The Euler-Lagrange equation can then be solved by separation of variables.

\section{Limit shape examples}
\label{se:limitshapes}

We demonstrate applications of Section \ref{se:trivpotential} to limit shapes in probability. We consider two concrete illustrative boundary value problems for domino tilings. More complex examples, with a more systematic treatment, are discussed in \cite{KP3} and \cite{P}. See also Section 4.3--4.4 of \cite{KP2} for explicit examples of limit shapes for staggered 5-vertex models. These non-determinantal examples go well beyond the framework of \cite{KO1,ADPZ} and are essentially based on Theorem \ref{thm:envelope} above.

\subsection{Domino tilings of the Aztec diamond}
We start by revisiting the classic arctic circle limit shape for domino tilings of the Aztec diamond, first derived in \cite{CEP}.
For domino tilings (tilings of regions in $\Z^2$ with $2\times 1$ and $1\times2$ rectangles) there are two
natural conformal coordinates $z\in\H$ and $w\in\bar\H$, related by the ``characteristic polynomial'' $1+z+w-zw=0$, as discussed in \cite{KO1}.
The relationship between $(s,t)$ and $(z,w)$ is given in Figure \ref{dominozw}: it is
$$(s,t) = \frac{2}{\pi}(\arg z-\arg w-\pi, \arg z+\arg w),$$
and $(s,t)\in\N = cvx\{(2,0),(0,2),(-2,0),(0,-2)\}$; see \cite{CKP}.
\begin{figure}
\begin{center}\includegraphics[width=3in]{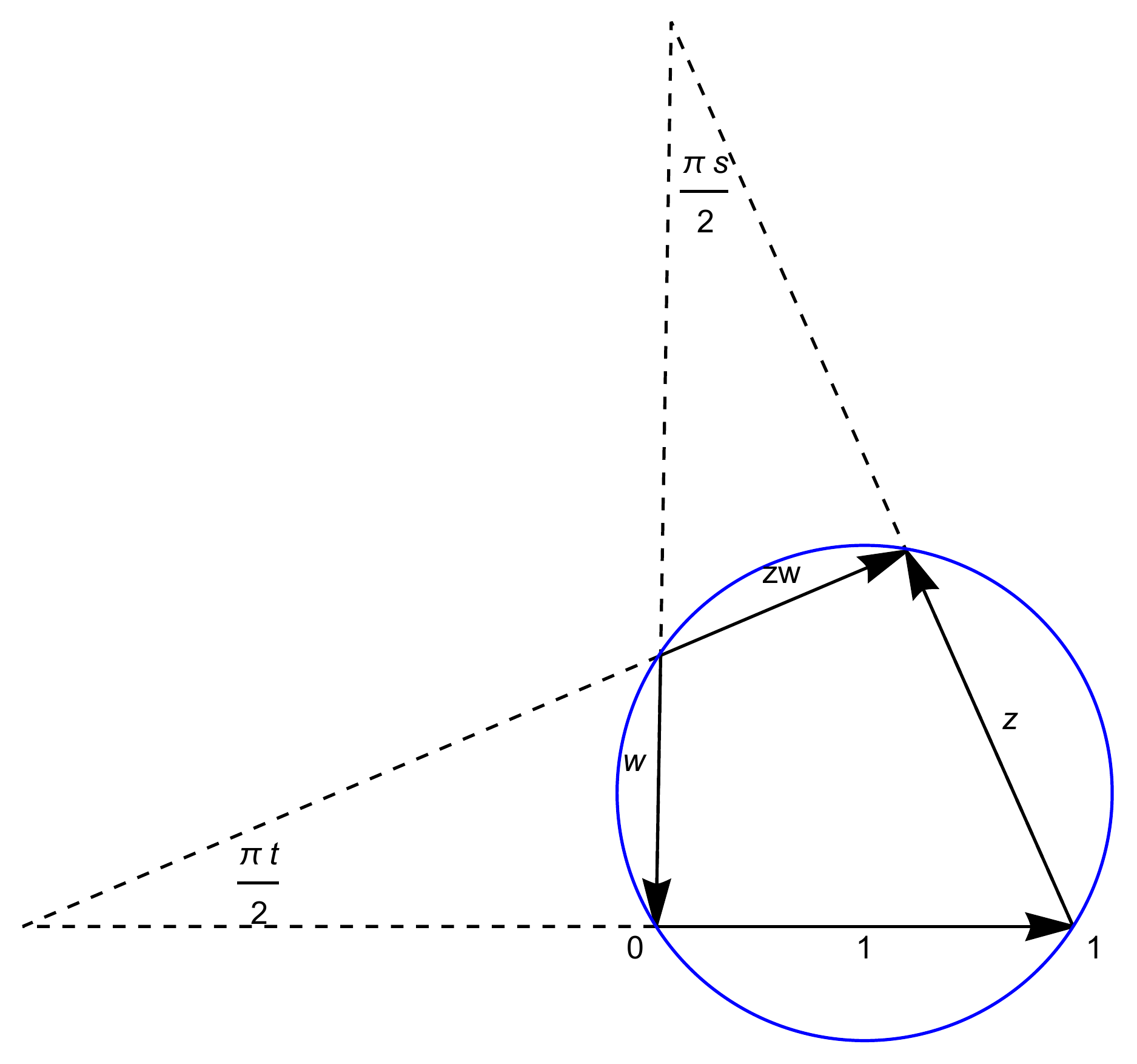}\end{center}
\caption{\label{dominozw} When $1+z+w-zw=0$ (and $z\in\H$), the convex quadrilateral with sides $1,z,-zw,w$ is cyclic.
The values $s,t$ are obtained from the angles of the extended sides as shown. }
\end{figure} 

The Aztec diamond of order $n$ is the region of Figure \ref{AD}, left panel. 
In the limit $n\to\infty$, the height function limit shape has facetted
regions near each corner, where the limit shape is linear with slope at a corner of $\N$. 
The liquid region is a disk tangent
to each of the sides of the limiting polygon. (We do not actually assume the exact shape of the liquid region, this will be found a posteriori.) The situation is as shown in Figure \ref{AD}, right panel: 
the height function along the 
boundary determines the linear equations of the limit shape near each corner. 
In this example the $z$ values run once around $\R\cup\{\infty\}$ as the point $(x,y)$ 
runs around the boundary of the liquid region.
The slopes and intercepts (in blue in the figure) are piecewise constant functions of $z$ as $z$ runs over the real axis, as indicated in the following table. Recall that by Corollary \ref{cor:dimerharmonic} we may assume that $\psi \equiv 1$ in \eqref{eq:G}.
\begin{center}\begin{tabular}{c | c | c | c | c}
&$z<-1$&$-1<z<0$&$0<z<1$&$1<z$\\\hline
$s$&$0$&$2$&$0$&$-2$\\
$t$&$2$&$0$&$-2$&$0$\\
$G$&$1$&$-1$&$1$&$-1$
\end{tabular}\end{center}
\begin{figure}
\begin{center}\includegraphics[width=2.in]{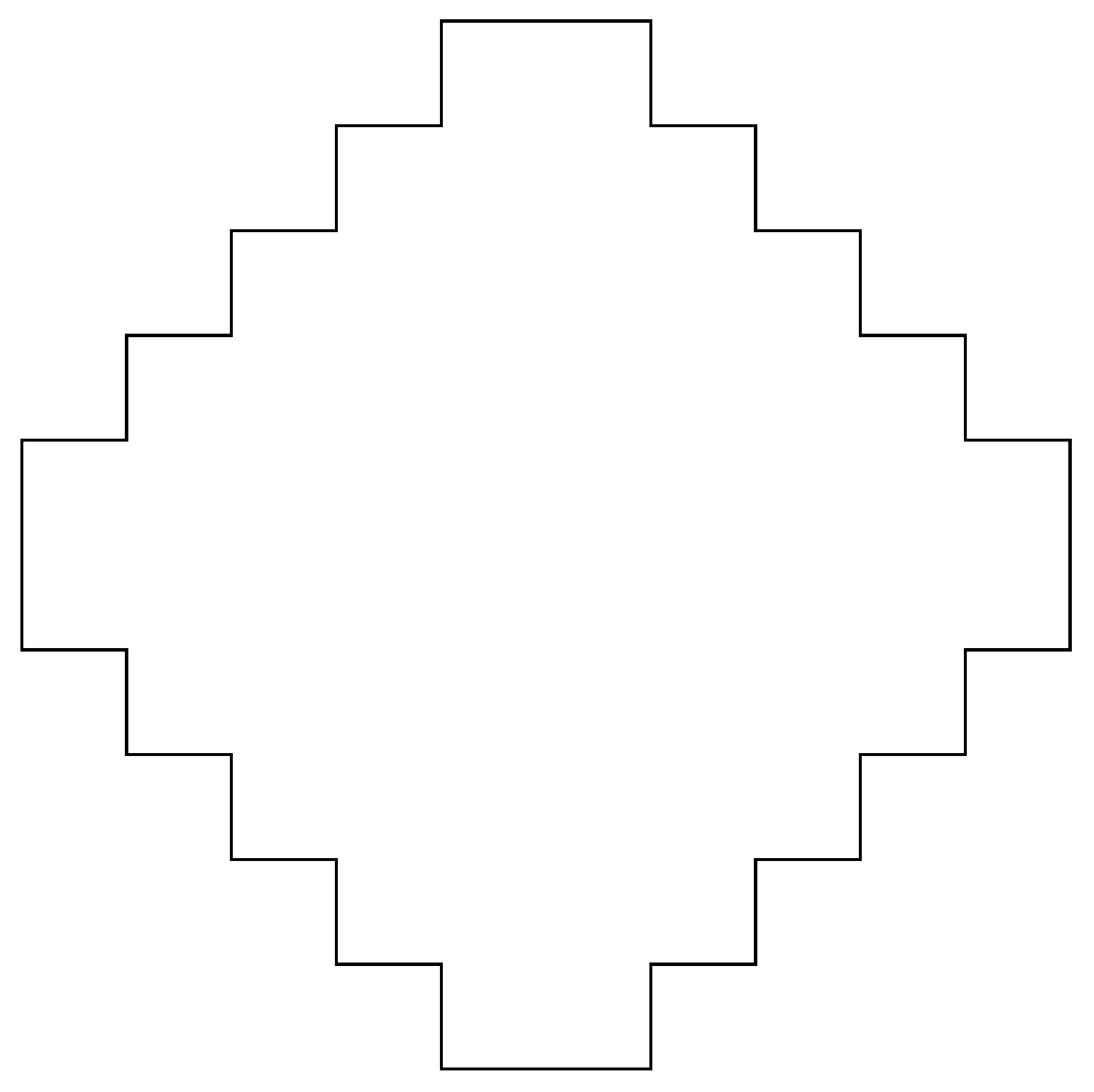}\hskip1cm\includegraphics[width=2.5in]{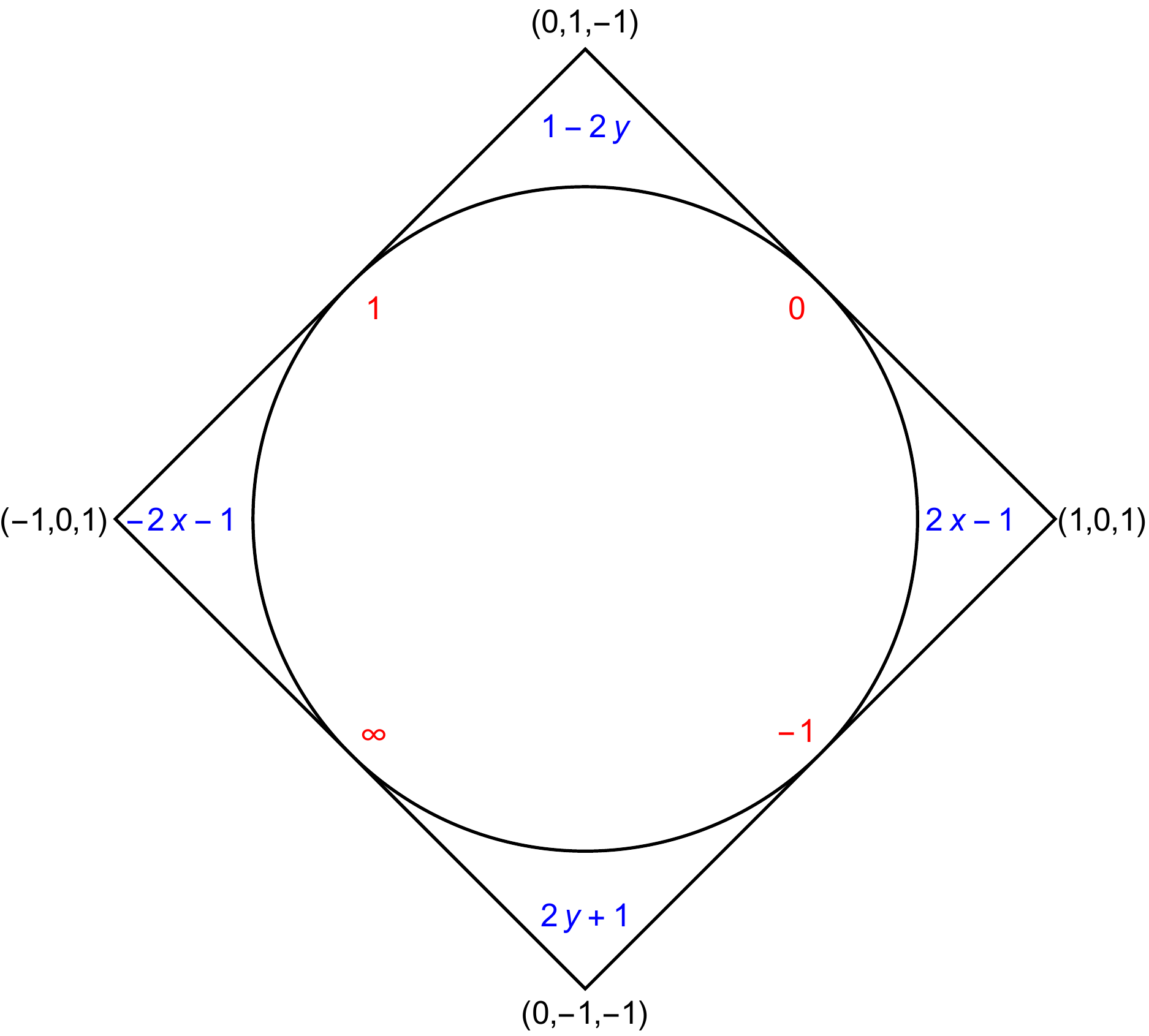}\end{center}
\caption{\label{AD} The Aztec diamond region of order $n=5$, and in the limit $n\to\infty$, 
the limiting heights of the facets (blue) and $z$ values at the tangency points (red). The black values are the positions of
the vertices in $\R^3$; the height (third coordinate) is linear along each boundary segment, and determines the intercepts of the linear functions on the facets.} 
\end{figure} 

Because $G$ is a harmonic function of $z$, we can find it from the harmonic extension of its boundary values on $\H$,
given by the last line in the above table; we have 
$$G = \frac{2}{\pi}(-\frac{\pi}{2}+\arg(z-1)-\arg(z)+\arg(z+1)).$$

The equation for the limit shape (the envelope of the moving planes) is obtained by solving the linear system, see \eqref{eq:complexline}
\begin{align}
sx+ty+G&=x_3\notag\\
s_z x+t_z y+G_z&=0\label{3deq}
%sx+ty+c&=x_3\notag\\
%s_{u_1}x+t_{u_1}y+c_{u_1}&=0\label{3deq}\\
%s_{u_2}x+t_{u_2}y+c_{u_2}&=0\notag
\end{align}
%(with $z=u_1+iu_2$) 
for $x,y,x_3$ as functions of $z$. See Figure \ref{AD3d}.
\begin{figure}
\begin{center}\includegraphics[width=3in]{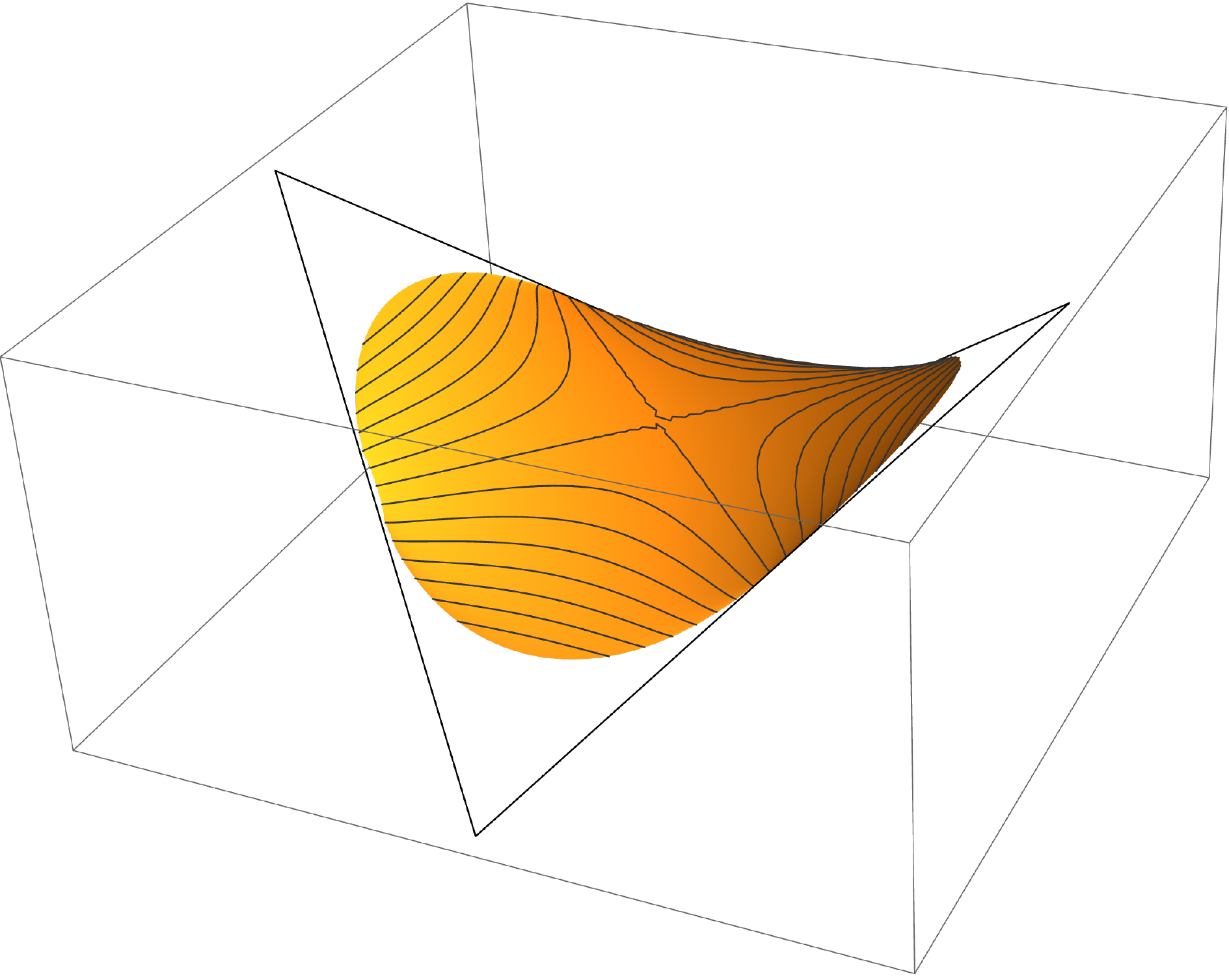}\end{center}
\caption{\label{AD3d} Aztec diamond limit shape (with contour lines for the height function).}
\end{figure} 

By calculating the $\partial_z$-derivatives, \eqref{3deq} takes the explicit form
\[ \left( \frac1z+\frac1{z-1}-\frac1{z+1}\right) x +\left( \frac1z - \frac1{z-1}+\frac1{z+1} \right) y + \frac{1}{z-1}-\frac1z+\frac{1}{z+1}=0. 
\] 
Taking real and imaginary parts, we  find $x$ and $y$ to be
\[x=\frac{1-2\, \Re z-|z|^2}{2(1+|z|^2)}  \quad \text{and} \quad y=\frac{1+2\, \Re z -|z|^2}{2(1+|z|^2)}.
\]
Inverting these relations
\[ z(x,y)=\frac{y-x + i \sqrt{1-2(x^2+y^2)}}{1+x+y}.
\]
The height function can be written down explicitly with the help of the introduced functions as
\[ x_3(x,y)=s(z) x +t(z) y+G(z), \quad x^2+y^2 < \frac12.
\]

We emphasize that in deriving the limit shape the only a priori assumption we made is that it has four facetted regions in the four corners. This assumption is justified rigorously (for this case and in similar settings with more sides) in \cite{KO1} and \cite{ADPZ}. The boundary conditions dictate the equations for these four planes and their harmonic extension in terms of the $z$-variable give the entire family of tangent planes. The exact form of the frozen boundary is then found \emph{a posteriori} from the envelope construction.

\subsection{Domino tilings of $L$-shape}

We consider domino tilings of an $L$-shaped Aztec diamond from \cite{CPS}.
Again the techniques of  \cite{KO1} and \cite{ADPZ} apply to this setting. However, it is highly nontrivial to find either the $Q_0$ function of \cite[Corollary 1]{KO1} or the Blaschke product/holomorphic factor pair $(B,\gamma)$ of \cite[Theorem 5.2]{ADPZ} from the given boundary conditions. Indeed, \cite{CPS} proceeds with the (not fully rigorous) tangent method to find the frozen boundary. We show here that the full limit surface can be obtained  from Corollary \ref{cor:dimerharmonic} with minimal difficulty.

\begin{figure}
\begin{center}\includegraphics[width=4in]{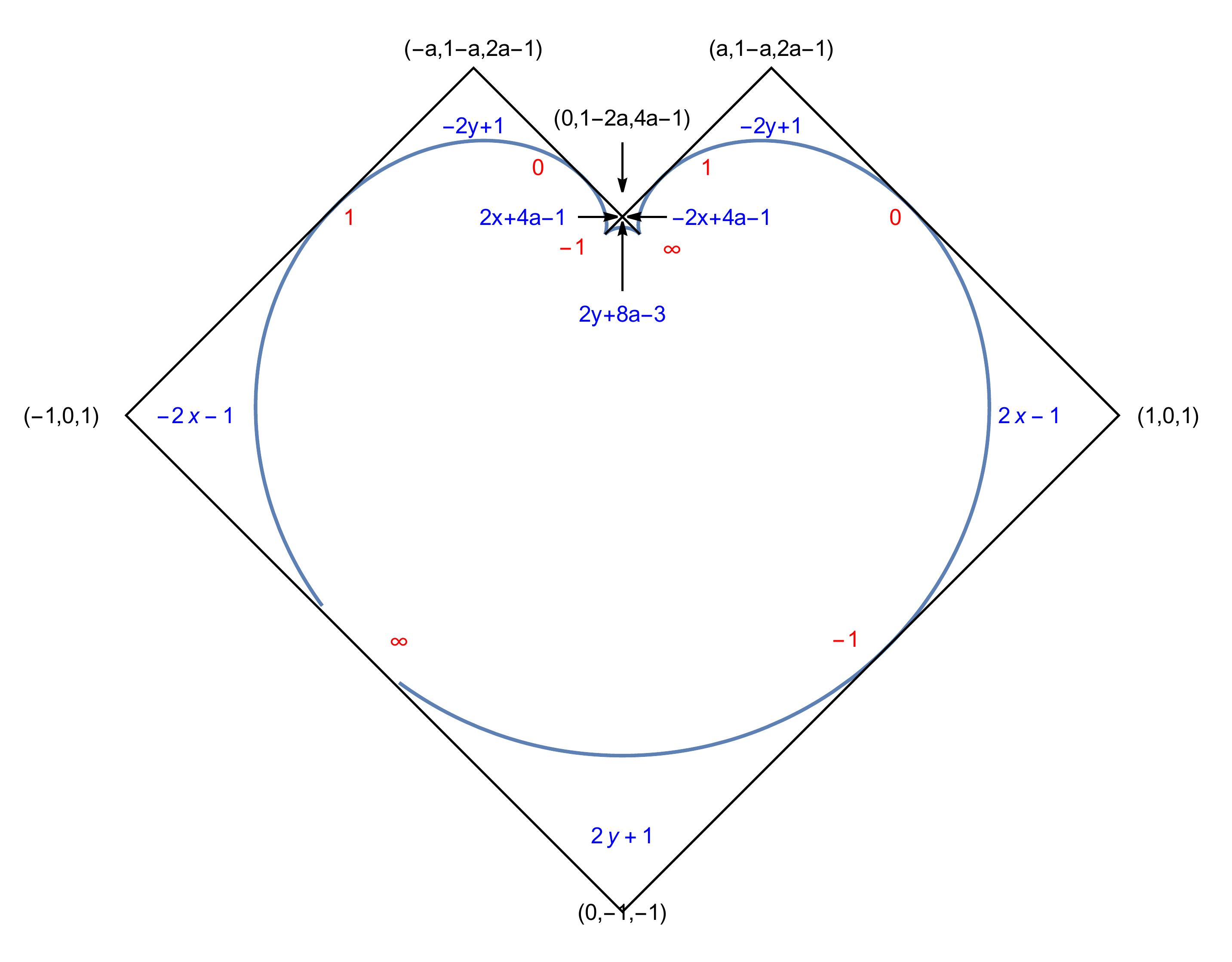}\end{center}
\caption{\label{ADL} Boundary data for domino tilings of an $L$ shaped region.}
\end{figure} 

See Figure \ref{ADL}. The $z$ values are marked in red, and facet equations are in blue; there are $8$ facets.
Let $u\in\H$ parameterize the liquid region; $u$ is a double cover of $z$, that is, $z=z(u)$ 
is a rational function of degree $2$. 
We can normalize
so that $u$ sends $0$ and $\infty$ to $\infty$.
Let $a_1<a_2<a_3<0<a_4<a_5<a_6$ be the points that $u$ sends to $\{-1,0,1,-1,0,1\}$ respectively.
Without loss of generality (after a M\"obius transformation) set $a_3=-1$.
Then $z=z(u)$ has the form $z=\frac{A (u-a_2)(u-a_5)}{u}$ where, solving, the constants 
$A,a_4,a_5,a_6$ are rational functions of $a_1,a_2$. 

\begin{center}\begin{tabular}{c | c | c | c | c|c|c|c|c}
&$P_1$&$P_2$ &$P_3$&$P_4$&$P_5$&$P_6$ &$P_7$&$P_8$\\\hline
$u$&$(-\infty,a_1)$&$(a_1,a_2)$&$(a_2,a_3)$&$(a_3,0)$&$(0,a_4)$&$(a_4,a_5)$&$(a_5,a_6)$&$(a_6,\infty)$\\
$z$&$(-\infty,-1)$&$(-1,0)$&$(0,1)$&$(1,\infty)$&$(-\infty,-1)$&$(-1,0)$&$(0,1)$&$(1,\infty)$\\\hline
$s$&$0$&$2$&$0$&$-2$&$0$&$2$&$0$&$-2$\\
$t$&$2$&$0$&$-2$&$0$&$2$&$0$&$-2$&$0$\\
$G$&$1$&$-1$&$1$&$4a-1$&$8a-3$&$4a-1$&$1$&$-1$
\end{tabular}\end{center}

There is one more nontrivial necessary condition, which is that $G_u$ vanishes at the branch point of 
$u$. This is necessary for the local single-valuedness of the surface. This condition uniquely determines
$(a_1,a_2)$ in the relevant range, and therefore all of $a_1,\dots,a_6$.
For example, when $a=\frac{10-3\sqrt{2}}{16}$, we have
$$(a_1,a_2,a_3,a_4,a_5,a_6) = (-4,-4+\frac{3}{\sqrt{2}},-1,\frac{-1+9\sqrt{2}}7,\frac{8+12\sqrt{2}}7,\frac{-4+36\sqrt{2}}7).$$
At this point it is straightforward to find the harmonic extension of $G$ as in the previous example, and solve the linear system
(\ref{3deq}) to get the equation of the surface; the explicit formulas are a little bulky to include here.
The resulting surface is shown in Figure \ref{ADL3d} for $a=.3$ (the feasible range of $a$ values is $0\le a<1/2$; at $a=1/2$
the surface breaks into two components).

\begin{figure}[h]
\begin{center}\includegraphics[width=3in]{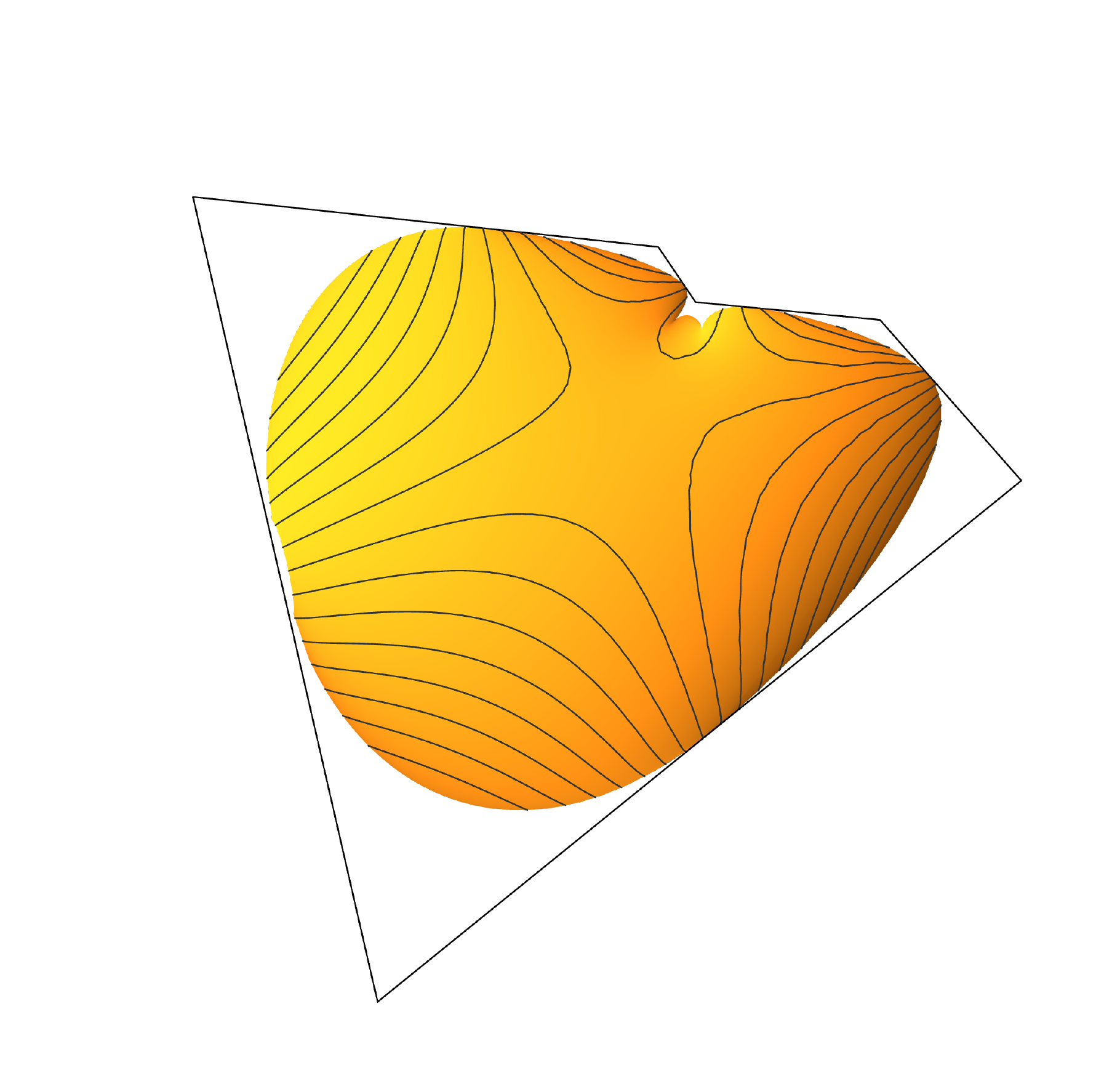}\end{center}
\caption{\label{ADL3d} The limit shape, showing height contours.}
\end{figure}

\bibliographystyle{hplain}
\bibliography{kharmonic}
\end{document}